\newcommand{\C}{\mathbb{C}}
\renewcommand{\P}{\mathbb{P}}
\newcommand{\A}{\mathcal{A}}
\newcommand{\Aff}[2]{\operatorname{Aff}({#1},{#2})}
\newcommand{\PGL}[2]{\operatorname{PGL}({#1},{#2})}
\newcommand{\tr}[1]{\operatorname{tr}\left({#1}\right)}
\renewcommand{\det}[1]{\operatorname{det}({#1})}
\newcommand{\spec}[2]{\operatorname{Spec}_{#1} {#2}}
\newcommand{\Ah}{\widehat{\mathcal{A}}}
\newcommand{\Sh}{\widehat{\mathcal{S}}}
\newcommand{\At}{\widetilde{\mathcal{A}}}
\renewcommand{\S}{\mathcal{S}}
\newcommand{\Sing}[1]{\operatorname{Sing}({#1})}
\newtheorem{theorem}{Theorem}
\newtheorem{corollary}[theorem]{Corollary}
\newtheorem{lemma}[theorem]{Lemma}
\newtheorem{proposition}[theorem]{Proposition}
\newtheorem{question}{Question}
  \theoremstyle{definition}
\newtheorem{definition}[theorem]{Definition}
  \theoremstyle{remark}
\theoremstyle{plain}
\newtheorem*{thmbis}{Theorem 1.3'}
\title[Twin vector fields and independence of spectra]{Twin vector fields and independence of spectra\\ for quadratic vector fields}
\author[V.~Ram\'{i}rez]	{Valente Ram\'{i}rez}
\address{Department of Mathematics\\Cornell University\\120 Malott Hall\\Ithaca NY 14853\\USA}
\email{valente@math.cornell.edu}
\date{\today}
\subjclass[2010]{Primary: 37F75; Secondary: 32M25, 32S65.}
\keywords{Quadratic vector fields, spectra of singularities, Euler-Jacobi formula.}
\thanks{This work was supported by the grants UNAM-DGAPA-PAPIIT IN-102413 and CONACYT 219722.}
\begin{document}

\begin{abstract}
The object of this paper is to address the following question: When is a polynomial vector field on $\C^2$ completely determined (up to affine equivalence) by the spectra of its singularities? We will see that for quadratic vector fields this is not the case: given a generic quadratic vector field there is, up to affine equivalence, exactly one other vector field which has the same spectra of singularities. Moreover, we will see that we can always assume that both vector fields have the same singular locus and at each singularity both vector fields have the same spectrum. Let us say that two vector fields are \emph{twin vector fields} if they have the same singular locus and the same spectrum at  each singularity. 

To formalize the above claim we shall prove the following: any two generic quadratic vector fields with the same spectra of singularities (yet possibly different singular locus) can be transformed by suitable affine maps to be either the same vector field or a pair of twin vector fields. 

We then analyze the case of quadratic Hamiltonian vector fields in more detail and find necessary and sufficient conditions for a collection of non-zero complex numbers to arise as the spectra of singularities of a quadratic Hamiltonian vector field. 

Lastly, we show that a generic quadratic vector field is completely determined (up to affine equivalence) by the spectra of its singularities together with the characteristic numbers of its singular points at infinity.
\end{abstract}

\maketitle

\section{Introduction}\label{sec:intro}

In this work we will consider polynomial vector fields on the affine plane $\C^2$. Let us denote by $\A_n$ the vector space of all polynomial vector fields
\[ v=P(x,y)\frac{\partial}{\partial x}+Q(x,y)\frac{\partial}{\partial y}, \]
such that $P$ and $Q$ have degree at most $n$. By Bezout's theorem, a generic element of $\A_n$ has exactly $n^2$ isolated singularities; let us define then $\At_n$ to be the space of vector fields $v\in\A_n$ having $n^2$ isolated singularities. Throughout this paper we will consider exclusively vector fields from the classes $\At_n$. We say that two vector fields $v,w$ are \emph{affine equivalent} if there exists an affine map $T$ that transforms $v$ into $w$, that is, $w(x,y)=DT\cdot v(T^{-1}(x,y))$. Denote by $\Sing{v}$ the singular locus of $v$. If $p\in \Sing{v}$, we define the \emph{spectrum} of $v$ at $p$ as the (unordered) pair of eigenvalues of the linearization matrix
\[ Dv(p)=\begin{pmatrix}
          P'_x & P'_y\\
	  Q'_x & Q'_y
         \end{pmatrix}\bigg\vert_{(x,y)=p}. \]
Note that if $M\in\mathfrak{gl}_2\C$ then the spectrum of $A$ carries exactly the same information as the ordered pair 
\[ \spec{}{M}:=(\operatorname{tr}{M},\operatorname{det}{M}). \]
It will be more convenient for our purposes to think of the spectra as such ordered pairs.

If $X$ is any topological space and $m\geq1$, let $S_m$ denote the symmetric group on $m$ elements and $X^m/S_m$ the quotient of the usual action of $S_m$ on $X^m$ which permutes the components. The set of spectra of singularities of a generic polynomial vector field of degree $n$ belongs to the space
\[ \S_n:=(\C^2)^{n^2}/S_{n^2}, \]
which is an irreducible affine algebraic variety. We have a well-defined map
\[ \spec{n}{}\colon\At_n\to\S_n. \]

In very general terms, we call \emph{independence of spectra} the question of understanding the image and the fibers of the map $\spec{n}{}$. We will see below that in the case of quadratic vector fields we have a good understanding of this question: the closure of the image of $\spec{2}{}$ is a codimension-two subvariety defined by the \emph{Euler-Jacobi relations on spectra} (i.e.~equations (\ref{eq:EJ1}) and (\ref{eq:EJ2}) in \hyperref[coro:EJrelations]{Corollary \ref*{coro:EJrelations}}) and the fiber of $\spec{2}{}$ over a generic point in its image consists of two disjoint orbits of the action of the affine group $\Aff{2}{\C}$ on $\At_2$.

\begin{definition}\label{def:samespectra}
We say that two vector fields from the class $\At_n$ have \emph{the same spectra of singularities} if they have the same image under the map $\spec{n}{}$.
\end{definition}

Note that the above definition involves the spectra of the singularities and does not take into account the position of these. Of particular interest are pairs of vector fields that share both position and spectra of singularities.

\begin{definition}\label{def:twins}
We will say that two vector fields $v_1,v_2\in\At_n$ are \emph{twin vector fields} if they are not equal yet they have exactly the same singular locus and, for each point $p$ in the common singular set, the matrices $Dv_1(p)$ and $Dv_2(p)$ have the same spectrum.
\end{definition}

The next two propositions are the main results of this work.

\begin{theorem}\label{thm:main}
If two quadratic vector fields on $\C^2$ with non-degenerate singularities have the same spectra of singularities then, possibly after transforming one of them by a suitable affine map, they either agree or they are twin vector fields. Moreover, a generic quadratic vector field has exactly one twin vector field.
\end{theorem}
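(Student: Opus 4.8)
The plan is to count dimensions on both sides of the spectra map and exhibit a generic two-to-one structure on the fibers. First I would set up the parameter count: the space $\At_2$ of quadratic vector fields with $4$ isolated singularities has dimension $12$ (each of $P,Q$ is a general polynomial of degree $\leq 2$, giving $6+6$ coefficients), while the affine group $\Aff{2}{\C}$ has dimension $6$. Modding out by affine equivalence, the moduli space has dimension $6$. On the target side, a point of $\S_2=(\C^2)^4/S_4$ records four spectra, hence $8$ parameters, but the image is cut out by the two Euler--Jacobi relations on spectra, so generically the image is a variety of dimension $6$. Thus $\spec{2}{}$ descends to a dominant map between two irreducible varieties of the same dimension $6$, and the first task is to argue that this descended map is generically finite.

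Next I would identify the generic fiber. The key geometric input is the Euler--Jacobi formula, which constrains the values a vector field can take at its singular locus; together with the fact that four generic points in $\C^2$ can be placed in general position by an affine map, this reduces the problem to solving an explicit algebraic system for the coefficients of $v$ given the four spectra. The strategy is to show that, after normalizing the position of the four singular points, the spectra determine the remaining coefficients up to a finite ambiguity, and then to compute the degree of this ambiguity. I expect that generically the system has exactly two solutions modulo the residual symmetry, which is precisely the statement that a generic vector field has exactly one twin. The construction of the twin itself should come out of the symmetry of this system: there is an involution on the solution set, fixed-point-free on a generic fiber, that sends a vector field to its twin while preserving both the singular locus and the spectrum at each singularity.

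The main obstacle will be establishing that the generic fiber has cardinality exactly two rather than merely finite. Finiteness follows from the equal-dimension dominance argument, but pinning down the degree requires either an explicit computation on a well-chosen normal form or a careful analysis of the involution. I would attempt the normal-form route: place the four singularities at a standard configuration (for instance three at coordinate points and one generic), write down the linear conditions the prescribed traces and determinants impose on the twelve coefficients, and verify that after accounting for the Euler--Jacobi relations the solution set is generically a pair of points interchanged by the natural involution. The delicate part is checking that this involution is genuinely fixed-point-free on a Zariski-open set, so that the two solutions are distinct and yield honest twin vector fields rather than a single vector field counted twice; this is what upgrades ``exactly one other vector field'' from a degree count to a statement about the existence of a unique twin.
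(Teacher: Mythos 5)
Your outline has the right overall shape---normalize three singularities by an affine map, exploit the Euler--Jacobi formula, and count solutions of an explicit system in the six remaining coefficients; this is essentially the setting of the paper's reformulation (Theorem 1.3', the statement that $\spec{}{}\colon\C^6\to\C^6$ is generically two-to-one). But there are two genuine gaps. First, your claim that ``four generic points in $\C^2$ can be placed in general position by an affine map'' is false: an affine map of $\C^2$ is determined by the images of three non-collinear points, so only three singularities can be normalized, and the position of the fourth carries honest moduli. The paper's crucial step, absent from your outline, is its Lemma~\ref{lemma}: applying Euler--Jacobi with $g=1$ and $g=\tr{Dv}$ shows the \emph{spectrum} of $p_4$ is determined by the spectra of $p_1,p_2,p_3$, and applying it with $g=x$ and $g=y$ shows the \emph{position} of $p_4$ is determined by the positions and spectra of $p_1,p_2,p_3$. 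This is exactly what upgrades ``same spectra at the three normalized points'' to ``same full spectra \emph{and} same full singular locus.'' Without it, two solutions of your normalized system are not known to share the fourth singularity, so they are not known to be twins, and the agree-or-twins dichotomy in the first half of the theorem does not close. You gesture at Euler--Jacobi as ``constraining values at the singular locus,'' but you never extract the position relations, which are the load-bearing part.

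Second, the degree count. Your direct system---three linear trace conditions and three quadratic determinant conditions on the six coefficients---has B\'ezout bound $8$, and nothing in your plan cuts this down to $2$: the ``involution on the solution set'' is the deck transformation of a degree-two map, which exists only \emph{after} you know the degree is two, so invoking it here is circular. The paper's device for collapsing the count is Max Noether's theorem: any quadratic vector field vanishing on $\Sing{v}$ is uniquely of the form $\tilde{v}=(aP+bQ)\frac{\partial}{\partial x}+(cP+dQ)\frac{\partial}{\partial y}$, so that $D\tilde{v}=A\cdot Dv$ with $A=\left(\begin{smallmatrix}a&b\\c&d\end{smallmatrix}\right)$, and (by the Lemma again) the same-spectra condition becomes $\tr{A\cdot Dv(p_k)}=\tr{Dv(p_k)}$ for $k=1,2,3$ together with $\det{A}=1$: three linear equations and \emph{one} quadratic in $(a,b,c,d)$, hence at most two solutions, one of which is $A=\mathrm{Id}$ (i.e., $v$ itself). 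Genericity---independence of the linear equations and nonvanishing of the discriminant---is then verified on a single explicit example, which is also how one gets the dominance you left unproved in your equal-dimension finiteness argument. To rescue your normal-form route you would have to carry out the elimination in the full $6\times 6$ system and account for the six excess B\'ezout solutions (which escape to infinity or degenerate), or else import the Noether reduction; as written, the proposal establishes neither that the fiber consists of twins nor that its cardinality is two.
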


Note that in the generic case a pair of quadratic twin vector fields cannot be affine equivalent to each other hence the above theorem implies that given a generic vector field $v$ there exist exactly two disjoint orbits of the action of the affine group on $\At_2$ consisting of vector fields with the same spectra as $v$.

\begin{theorem}\label{thm:charnumbers}
A generic quadratic vector field is completely determined (up to affine equivalence) by the spectra of its finite singularities and the characteristic numbers of its singular points at infinity.
\end{theorem}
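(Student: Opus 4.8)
The plan is to build on Theorem~\ref{thm:main}, which already tells us that a generic quadratic vector field $v$ has, up to affine equivalence, exactly two preimages under $\spec{2}{}$: the orbit of $v$ itself and the orbit of its twin $v^*$. Thus the spectra of the finite singularities pin $v$ down to one of these two orbits, and the entire task reduces to showing that the characteristic numbers at infinity distinguish the orbit of $v$ from the orbit of its twin $v^*$. So I would first recall the notion of the characteristic numbers (the Camacho--Sad--type indices, or equivalently the eigenvalue ratios of the linearization of the extended foliation at each singular point on the line at infinity $\ell_\infty\subset\P^2$), noting that these are affine invariants: an affine map fixes $\ell_\infty$ and permutes the points at infinity, so the unordered collection of characteristic numbers is constant along each affine orbit. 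This makes the statement well-posed.

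Next I would set up the comparison between $v$ and its twin $v^*$ at infinity. The key structural fact I expect to exploit is that for a quadratic vector field the behavior at infinity is governed by the top-degree homogeneous part, together with the trace/determinant data at the finite singularities via an Euler--Jacobi type relation. Since $v$ and $v^*$ share the full spectra $\{(\tr{Dv(p_i)},\det{Dv(p_i)})\}$ of their finite singularities, I would express the characteristic numbers at infinity in terms of global invariants (sums and products of the finite spectral data plus the leading coefficients of the foliation) and ask whether these are forced to coincide. The natural tool here is the index/trace formula on $\P^2$: the sum of the Camacho--Sad indices along $\ell_\infty$ is fixed topologically (it equals a self-intersection number), and analogous relations constrain the symmetric functions of the characteristic numbers. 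I would try to show that, although these symmetric functions are determined by data common to $v$ and $v^*$, the individual unordered tuple of characteristic numbers is \emph{not} — that is, the twin construction, which preserves finite spectra, genuinely alters the splitting of the eigenvalues at infinity.

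The cleanest route is probably to make the twin involution explicit on a normal form. Using the normalization from the proof of Theorem~\ref{thm:main}, I would write a generic $v$ and its twin $v^*$ in coordinates, pass to the chart at infinity, and directly compute the three characteristic numbers of each at the (generically three distinct) singular points on $\ell_\infty$. Comparing the two resulting unordered triples, I would verify that they differ for generic parameter values; by genericity (the locus where the triples coincide being a proper algebraic subset), this separates the two orbits. Combined with the finite-spectra information, this shows that no other affine orbit can share both the finite spectra and the characteristic numbers at infinity, which is exactly the assertion.

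The main obstacle I anticipate is the last comparison: a priori the twin could preserve the characteristic numbers at infinity as well (this is precisely what happens for the finite spectra, by the defining property of twins), and then the theorem would fail. So the heart of the argument is a genericity computation showing that the twin involution acts nontrivially on the tuple of characteristic numbers. I expect this to hinge on the fact that the twin map alters the position of the finite singularities (it preserves spectra but not, in general, the singular locus once we undo the affine normalization), and that this repositioning feeds into the leading homogeneous part that controls $\ell_\infty$. Making that dependence transparent — ideally by identifying an explicit affine invariant built from the characteristic numbers that takes distinct values on $v$ and $v^*$ — is where the real work lies; everything else is bookkeeping with the already-established fiber structure of $\spec{2}{}$.
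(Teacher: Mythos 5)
Your proposal is correct and essentially the paper's own argument: Theorem~\ref{thm:main} reduces the problem to showing that a generic $v$ and its twin have different characteristic numbers at infinity, and the paper certifies this exactly as you plan, by an explicit normal-form computation --- a single example $v=(x^2+2xy-x)\frac{\partial}{\partial x}+(-xy+3y^2-3y)\frac{\partial}{\partial y}$ whose characteristic numbers are all rational while its twin's are not, upgraded to the generic statement via a moduli map $\mathcal{M}$ and the inverse function theorem (one witness suffices, since, as you note, the coincidence locus is algebraic). One correction to your closing heuristic: the twin does \emph{not} alter the position of the finite singularities --- by Definition~\ref{def:twins}, and by the construction $\tilde{v}=(aP+bQ)\frac{\partial}{\partial x}+(cP+dQ)\frac{\partial}{\partial y}$ in the proof of Theorem~\ref{thm:main}, twins share the singular locus exactly; what perturbs the data at infinity is instead the linear mixing by the matrix $A$ of the top-degree homogeneous parts, which changes both the singular points on the line at infinity and their characteristic numbers.
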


An important question about independence of spectra is the \emph{realization problem}: 

\begin{question}\label{ques:realization}
Which collections of numbers in $\S_n$ can be realized as the spectra of a degree $n$ polynomial vector field?
\end{question}

We shall answer this question in the particular case of quadratic Hamiltonian vector fields in \hyperref[thm:realization]{Theorem \ref*{thm:realization}}.

We remark that \hyperref[thm:main]{Theorem \ref*{thm:main}} is very similar in spirit to results in \cite{LinsNeto2012} (for foliations on $\P^2$ of degree two) and \cite{IlyashenkoMoldavskis2011} (for foliations on $\P^2$ coming from a generic quadratic vector field on $\C^2$), where it is proved that in the generic case the \emph{Baum-Bott indices} completely determine a foliation up to finite ambiguity (modulo the natural action of $\PGL{2}{\C}$ and $\Aff{2}{\C}$, respectively). In fact, Lins Neto proves that the generic fiber of the Baum-Bott map, which associates to a foliation the Baum-Bott indices of its singularities, contains exactly 240 orbits of the natural action of $\PGL{2}{\C}$.

This paper is organized as follows: we shall first discuss a few general facts about polynomial vector fields before specializing to the case of quadratic ones and proving \hyperref[thm:main]{Theorem \ref*{thm:main}}. After this, we will analize the case of quadratic Hamiltonian vector fields. We then discuss \hyperref[thm:charnumbers]{Theorem \ref*{thm:charnumbers}} and conclude with a related open question.

\section{The Euler-Jacobi formula}\label{sec:EJ}

Let us recall the statement of the Euler-Jacobi formula in the particular case relevant to us \cite[Chpt.~5, Sec.~2]{GriffithsHarris1994}.

\begin{theorem}\label{thm:EJformula}
If $P,Q$ are polynomials in $\C[x,y]$ of degree $n$ whose divisors intersect transversely in $n^2$ different points $p_1,\ldots,p_{n^2}\in\C^2$ and $g(x,y)$ is a polynomial of degree at most $2n-3$ then
\[ \sum_{k=1}^{n^2} \frac{g(p_k)}{\mathbf{J}(p_k)}=0, \]
where $\mathbf{J}(x,y)$ is the Jacobian determinant $\mathbf{J}(x,y)=\operatorname{det}\,\displaystyle\frac{\partial(P,Q)}{\partial(x,y)}$.
\end{theorem}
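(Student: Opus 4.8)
The plan is to exhibit the sum $\sum_k g(p_k)/\mathbf{J}(p_k)$ as the complete collection of point residues of a single global meromorphic $2$-form on $\P^2$, and then to invoke the vanishing of that total (the global residue theorem, itself a consequence of Stokes' theorem on the compact surface $\P^2$). The degree hypothesis $\deg g\le 2n-3$ will enter in exactly one place: it is the precise condition guaranteeing that this $2$-form has no pole along the line at infinity, so that the points $p_1,\dots,p_{n^2}$ carry \emph{all} of its residues.

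First I would form the rational $2$-form
\[ \omega=\frac{g(x,y)\,dx\wedge dy}{P(x,y)\,Q(x,y)} \]
and regard it on $\P^2$. On the affine chart its polar set is $\{P=0\}\cup\{Q=0\}$, and by hypothesis these curves meet transversely in the $n^2$ distinct points $p_k$. Near each $p_k$ one has $dP\wedge dQ=\mathbf{J}\,dx\wedge dy$ with $\mathbf{J}(p_k)\ne 0$, so $(P,Q)$ serves as a local coordinate system and the Grothendieck residue of $\omega$ is computed directly to be
\[ \operatorname{Res}_{p_k}\omega=\frac{g(p_k)}{\mathbf{J}(p_k)}. \]

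Next I would account for the behaviour at infinity. Writing $x=X/Z$, $y=Y/Z$ in homogeneous coordinates $[X:Y:Z]$ and using $\operatorname{div}(dx\wedge dy)=-3L_\infty$ together with the fact that a polynomial of degree $d$ has a pole of order $d$ along the line at infinity $L_\infty$, the divisor of $\omega$ on $\P^2$ comes out to be
\[ \operatorname{div}(\omega)=\overline{\{g=0\}}-\overline{\{P=0\}}-\overline{\{Q=0\}}+(2n-3-\deg g)\,L_\infty. \]
Since all $n^2$ intersection points are finite, the curves $\overline{\{P=0\}}$ and $\overline{\{Q=0\}}$ do not meet each other on $L_\infty$, so the only mechanism by which $\omega$ could produce residues away from the $p_k$ is a genuine pole along $L_\infty$. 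That coefficient is $2n-3-\deg g$, which is nonnegative exactly when $\deg g\le 2n-3$; in that case the polar divisor of $\omega$ is precisely $\overline{\{P=0\}}+\overline{\{Q=0\}}$. The global residue theorem applied to $\omega$ with these two pole curves then yields $\sum_k\operatorname{Res}_{p_k}\omega=0$, which is the desired identity.

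I expect the main obstacle to be the clean justification at infinity: one must verify not only the divisor bookkeeping above but that $\omega$ genuinely extends without poles across $L_\infty$, including across the finitely many points where $\{P=0\}$ or $\{Q=0\}$ individually cross $L_\infty$, so that the hypothesis of the global residue theorem (polar divisor $\le\overline{\{P=0\}}+\overline{\{Q=0\}}$) is actually met. If one wishes to avoid citing the global residue theorem as a black box, this same step can instead be carried out by hand: integrate $\omega$ over the distinguished boundary of a large polydisc, identify that integral with $(2\pi i)^2\sum_k g(p_k)/\mathbf{J}(p_k)$ by applying the one-variable residue theorem twice, and then show the boundary integral decays to $0$ as the polydisc exhausts $\C^2$. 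Establishing this decay is precisely where $\deg g\le 2n-3$ is indispensable, and making the estimate uniform is the delicate point.
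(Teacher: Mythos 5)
Your argument is correct and is essentially the paper's own proof: the paper does not prove \hyperref[thm:EJformula]{Theorem \ref*{thm:EJformula}} directly but cites \cite[Chpt.~5, Sec.~2]{GriffithsHarris1994}, where the argument is exactly yours --- realize each term $g(p_k)/\mathbf{J}(p_k)$ as the Grothendieck residue of $\omega=g\,dx\wedge dy/(PQ)$ at the transverse intersection point $p_k$, check via the divisor computation $\operatorname{div}(\omega)=\overline{\{g=0\}}-\overline{\{P=0\}}-\overline{\{Q=0\}}+(2n-3-\deg g)L_\infty$ that the hypothesis $\deg g\le 2n-3$ removes the pole along the line at infinity, and apply the global residue theorem on $\P^2$. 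Your flagged concern about extending $\omega$ across isolated points of $L_\infty$ is in fact automatic, since the polar set of a meromorphic $2$-form on a surface is purely one-codimensional, so the divisor bookkeeping already suffices.
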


Consider a polynomial vector field $v=P\frac{\partial}{\partial x}+Q\frac{\partial}{\partial y}$ of degree $n\geq2$. By making $g(x,y)=1$ or $g(x,y)=\tr{Dv(x,y)}$ we obtain polynomials whose value at the singular point $p_k$ depens exclusively on the spectrum of $Dv(p_k)$.

\begin{corollary}\label{coro:EJrelations}
If $v=P\frac{\partial}{\partial x}+Q\frac{\partial}{\partial y}$ is a polynomial vector field of degree $n\geq2$ having $n^2$ non-degenerate singularities $p_1,\ldots,p_{n^2}\in\C^2$ then
\begin{align}
\sum_{k=1}^{n^2}\frac{1}{\det{Dv(p_k)}}=0, \label{eq:EJ1} \\
\sum_{k=1}^{n^2}\frac{\tr{Dv(p_k)}}{\det{Dv(p_k)}}=0. \label{eq:EJ2}
\end{align}
\end{corollary}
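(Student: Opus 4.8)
The plan is to apply the Euler-Jacobi formula (Theorem \ref{thm:EJformula}) with two well-chosen test polynomials $g$ and to recognize that the resulting sums are exactly the stated spectral sums. First I would fix the notation: write $v=P\frac{\partial}{\partial x}+Q\frac{\partial}{\partial y}$ with $P,Q$ of degree $n$, so that the Jacobian determinant is precisely $\mathbf{J}(x,y)=\det{Dv(x,y)}$. Since $v$ is assumed to have $n^2$ non-degenerate isolated singularities, the common zero set of $P$ and $Q$ consists of $n^2$ distinct points $p_1,\ldots,p_{n^2}$, and at each such point non-degeneracy means $\det{Dv(p_k)}=\mathbf{J}(p_k)\neq0$. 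The one technical hypothesis needed to invoke Theorem \ref{thm:EJformula} is that the divisors $\{P=0\}$ and $\{Q=0\}$ meet transversely in exactly $n^2$ points; I would note that this is equivalent to the non-degeneracy of the singularities together with their count being maximal, so the hypotheses of the Euler-Jacobi formula are met.

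Next I would verify the degree condition on the test functions. For equation (\ref{eq:EJ1}) I take $g(x,y)=1$, which has degree $0\leq 2n-3$ for every $n\geq2$; the Euler-Jacobi formula then gives immediately $\sum_k 1/\mathbf{J}(p_k)=\sum_k 1/\det{Dv(p_k)}=0$. For equation (\ref{eq:EJ2}) I take $g(x,y)=\tr{Dv(x,y)}=P'_x+Q'_y$. Here the key observation is the degree bookkeeping: since $P$ and $Q$ have degree at most $n$, each of the partial derivatives $P'_x$ and $Q'_y$ has degree at most $n-1$, so $\tr{Dv(x,y)}$ is a polynomial of degree at most $n-1$. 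Because $n\geq2$ implies $n-1\leq 2n-3$, this $g$ is again admissible, and the Euler-Jacobi formula yields $\sum_k \tr{Dv(p_k)}/\det{Dv(p_k)}=0$, which is exactly (\ref{eq:EJ2}).

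The only substantive point to emphasize is that in each case the value $g(p_k)$ depends only on the spectrum of $Dv(p_k)$: for $g=1$ this is trivial, and for $g=\tr{Dv(x,y)}$ the value at $p_k$ is the trace of the linearization, which is the sum of the eigenvalues and hence a function of the spectrum alone. This is what lets us interpret the formulas as genuine relations \emph{on spectra}, as claimed in the introduction. I anticipate no serious obstacle: the entire content is the degree estimate $\deg g\leq 2n-3$ for the two choices of $g$, and the main thing to be careful about is confirming that the trace, being a sum of $n-1$-degree partial derivatives, does not exceed the Euler-Jacobi degree bound precisely when $n\geq2$ (for $n=1$ the bound $2n-3=-1$ would fail, which is consistent with the hypothesis $n\geq2$).
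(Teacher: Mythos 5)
Your proposal is correct and follows essentially the same route as the paper: the corollary is obtained by applying \hyperref[thm:EJformula]{Theorem \ref*{thm:EJformula}} with the two choices $g(x,y)=1$ and $g(x,y)=\tr{Dv(x,y)}$, noting that $\mathbf{J}(x,y)=\det{Dv(x,y)}$ and that non-degeneracy of the $n^2$ singularities is exactly transversality of the divisors. Your explicit degree bookkeeping ($\deg g\leq n-1\leq 2n-3$ for $n\geq2$) merely spells out what the paper leaves implicit, and is consistent with the paper's remark that these are the only choices of $g$ yielding relations on spectra.
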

\noindent We call these equations the \emph{Euler-Jacobi relations on spectra}.

Note that because of the condition $\operatorname{deg }g(x,y)\leq 2n-3$ in the Euler-Jacobi formula, the only choices of $g$ that will give us relations on the spectra are $g(x,y)=1$ and $g(x,y)=\tr{Dv(x,y)}$.

\section{Counting dimensions}\label{sec:dimensions}

Let us exclude the trivial case $n=1$ in the following discussion. The space $\A_n$ has dimension $(n+1)(n+2)$. The affine group $\Aff{2}{\C}$ acts on $\A_n$ in a natural way and it is clear that affine equivalent vector fields have the same spectra of singularities. Let us define $\Ah_n=\At_n\sslash\Aff{2}{\C}$, that is, the quotient of this action in the senese of geometric invariant theory; we have
\[ \operatorname{dim}\,\Ah_n=n^2+3n-4. \]

On the other hand the map $\spec{n}{}$ was defined to have codomain $\S_n:=(\C^2)^{n^2}/S_{n^2}$, which has dimension $2n^2$. Let us denote by $\Sh_n$ the Zariski closure of the set of points in $\S_n$ with non-zero components that satisfy equations (\ref{eq:EJ1}) and (\ref{eq:EJ2}). In this way $\spec{n}{}$ actually takes values on $\Sh_n$. We have
\[ \operatorname{dim}\,\Sh_n=2n^2-2. \]

We obtain an induced map on the quotient $\widehat{\operatorname{Spec}}_n\colon\Ah_n\to\Sh_n$, and the above arguments show that $\operatorname{dim}\,\Sh_n\geq\operatorname{dim}\,\Ah_n$, with equality only in the case $n=2$. This can be rephrased by saying that the number of analytic invariants arising from the spectra (modulo the Euler-Jacobi relations on spectra) is at least as big as the number of parameters needed to define a vector field (up to affine equivalence). The case $n=2$, where both dimensions are equal, is analyzed in the next section; we will see that, modulo affine equivalence, the spectra determines the vector field up to finite ambiguity (in fact, up to its unique twin). This can be rephrased by saying that the generic fiber of the induced map 
\[ \widehat{\operatorname{Spec}}_2\colon\Ah_2\to\Sh_2 \] 
consists of two points -- or equivalently, that it is a generically two-to-one map.

Because of the above dimension count it is reasonable to expect that if $n>2$ then the spectra of singularities will turn out to be a complete set of analytic invariants.

\section{Quadratic vector fields}\label{sec:QVFs}

The next lemma is an essential step in the proof of \hyperref[thm:main]{Theorem \ref*{thm:main}}.

\begin{lemma}\label{lemma}
Let $v$ be a quadratic vector field having four non-degenerate singularities $p_1,\ldots,p_4$. The position and spectrum of $p_4$ is completely determined by the position and spectra of $p_1,p_2,p_3$.
\end{lemma}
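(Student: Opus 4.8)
The plan is to express the four scalars that encode the position and spectrum of $p_4$ --- the coordinates of $p_4=(x_4,y_4)$ together with $\tr{Dv(p_4)}$ and $\det{Dv(p_4)}$ --- directly in terms of the analogous data at $p_1,p_2,p_3$, by feeding a carefully chosen list of test polynomials $g$ into the Euler-Jacobi formula. The key observation is that for a quadratic field the degree bound in \hyperref[thm:EJformula]{Theorem \ref*{thm:EJformula}} reads $2n-3=1$, so \emph{every} polynomial of degree at most one is an admissible choice of $g$; in particular the coordinate functions $x$ and $y$ are admissible, and so is $\tr{Dv(x,y)}=P'_x+Q'_y$, which has degree at most one since $P,Q$ have degree at most two. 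I would first record that the non-degeneracy hypothesis makes the four singular points precisely the $n^2=4$ transverse intersections of the divisors $\{P=0\}$ and $\{Q=0\}$ demanded by the Euler-Jacobi formula, since $\mathbf{J}(p_k)=\det{Dv(p_k)}\neq0$ at each of them; in particular this forces $\deg P=\deg Q=2$ with no intersection at infinity, so the hypotheses are met.

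Writing $d_k=\det{Dv(p_k)}$ and $t_k=\tr{Dv(p_k)}$, I would then apply the formula to the four test polynomials $g\in\{1,\,x,\,y,\,\tr{Dv(x,y)}\}$, obtaining
\begin{gather*}
\sum_{k=1}^{4}\frac{1}{d_k}=0,\qquad \sum_{k=1}^{4}\frac{x_k}{d_k}=0,\\
\sum_{k=1}^{4}\frac{y_k}{d_k}=0,\qquad \sum_{k=1}^{4}\frac{t_k}{d_k}=0.
\end{gather*}
The first and last are the Euler-Jacobi relations on spectra from \hyperref[coro:EJrelations]{Corollary \ref*{coro:EJrelations}}; the middle two are the new ingredient, relating the actual positions of the singular points. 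Since every $d_k\neq0$, the first relation determines $d_4$ from $d_1,d_2,d_3$ via $d_4^{-1}=-(d_1^{-1}+d_2^{-1}+d_3^{-1})$. Substituting this now-known value of $d_4$, each of the remaining three relations involves a single remaining unknown and therefore pins down $x_4$, $y_4$ and $t_4$ respectively. Thus both the position $(x_4,y_4)$ and the spectrum $(t_4,d_4)$ of $p_4$ are recovered from the data at $p_1,p_2,p_3$, which is exactly the assertion of the lemma.

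I do not expect a genuine obstacle here; the argument hinges entirely on the arithmetic coincidence $2n-3=1$ at $n=2$, which is what makes $x$, $y$ and $\tr{Dv}$ simultaneously admissible test functions. The only steps requiring care are the bookkeeping ones: checking that $\tr{Dv}$ has degree at most one, and confirming that non-degeneracy of the singularities supplies exactly the transversality needed to invoke the formula and to guarantee that each $d_k$ is nonzero, so that all the displayed sums are meaningful. As a consistency check one may note that, because $\tr{Dv}$ is itself affine-linear, its value at $p_4$ could alternatively be obtained by interpolating that affine-linear function through the three known traces at $p_1,p_2,p_3$; this recovers the same $t_4$ and reflects the broader theme that the spectra and positions of the four singularities are far from independent.
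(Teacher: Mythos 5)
Your proof is correct and is essentially the paper's own argument: a double application of the Euler--Jacobi formula, using $g\in\{1,\tr{Dv}\}$ to pin down the spectrum of $p_4$ and $g\in\{x,y\}$ to pin down its position, exploiting that $2n-3=1$ when $n=2$. Your additional bookkeeping (verifying that non-degeneracy gives the transversality hypothesis and that each $d_k\neq 0$, and solving for $d_4$ first so the remaining relations become linear in one unknown) only makes explicit what the paper leaves implicit.
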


\begin{proof}
This lemma is a double application of the Euler-Jacobi formula. First, we can think of relations (\ref{eq:EJ1}) and (\ref{eq:EJ2}) as a system of equations which we can solve for $\spec{}{Dv(p_4)}$ every time we are given $\spec{}{Dv(p_k)}$ for $k=1,2,3$. Second, if we let $g_1(x,y)=x$ and $g_2(x,y)=y$ in the Euler-Jacobi formula, then the system of equations 
\[ \sum_{k=1}^4 \frac{g_1(p_k)}{\det{Dv(p_k)}}=0,\qquad \sum_{k=1}^4 \frac{g_2(p_k)}{\det{Dv(p_k)}}=0, \]
determines completely the position of $p_4$.
\end{proof}

Note that the above lemma implies in particular that two quadratic vector fields are twins if and only if three out of their four singularities agree in position and spectra.

\begin{proof}[Proof of Theorem \ref*{thm:main}]
Suppose that two quadratic vector fields $v,\tilde{v}$ with non-degenerate singularities have the same spectra. After transforming $\tilde{v}$ by a suitable affine map on $\C^2$ we may assume that $v$ and $\tilde{v}$ have three singularities that agree in position and spectra. By \hyperref[lemma]{Lemma \ref*{lemma}} the same holds for the fourth singularity. We conclude that either $\tilde{v}=v$ or $\tilde{v}$ is a twin vector field of $v$.

We now prove that twin vector fields exist and are unique. Let $v=P\frac{\partial}{\partial x}+Q\frac{\partial}{\partial y}$ be a quadratic vector field having four non-degenerate singularities $p_1,\ldots,p_4$. By Max Noether's theorem, any quadratic polynomial $H$ which vanishes on the singular set $\Sing{v}=\{P=0\}\cap\{Q=0\}$ can be written uniquely as
\[ H=\alpha P+\beta Q, \]
for some complex numbers $\alpha,\beta$. This means that any quadratic vector field that vanishes on the singular set $\Sing{v}$ can be uniquely written as
\[ \tilde{v}=(aP+bQ)\frac{\partial}{\partial x}+(cP+dQ)\frac{\partial}{\partial y}, \]
for complex numbers $a,b,c,d$. Let us denote $A=\left(\begin{smallmatrix}a&b\\c&d\end{smallmatrix}\right)$ and note that $D\tilde{v}(x,y)=A\cdot Dv(x,y)$. In virtue of \hyperref[lemma]{Lemma \ref*{lemma}} the vector field $\tilde{v}$ has the same spectra as $v$ if and only if they have the same spectra at $p_1,p_2,p_3$. This happens if and only if
\begin{equation}\label{eq:system} \begin{array}{rcl}
\tr{A\cdot Dv(p_k)}&=&\tr{Dv(p_k)},\quad \mbox{for } k=1,2,3, \\
\det{A} &=& 1. \\
\end{array} \end{equation}
The above is a system of three linear equations and one quadratic equation on $a,b,c,d$. If the system is independent, we can always eliminate three of these variables using the linear equations and then substitute into the quadratic one. This gives a quadratic equation in one variable which generically would two different solutions. In order to check that for a generic quadratic vector field the linear system is independent and the discriminant of the quadratic equation is not zero it is enough to find a single example with these properties. A simple example is given by the Hamiltonian vector field $v=x(x+2y-1)\frac{\partial}{\partial x}+y(-2x-y+1)\frac{\partial}{\partial y}$. The computations are straightforward and we will omit them here.

This proves that for a generic vector field system (\ref{eq:system}) has two solutions; one corresponds to the original vector field $v$ and the other to a different vector field $\tilde{v}$, thus establishing existence and uniqueness of a twin vector field for a generic vector field $v$.
\end{proof}

\section{Quadratic Hamiltonian vector fields}\label{sec:QHVFs}

The quadratic Hamiltonian case can be very clearly understood. Let $\mathcal{H}_2$ denote the space of quadratic Hamiltonian vector fields. This space has dimension 9, since the space of polynomials in $\C[x,y]$ of degree three is ten dimensional. Alternatively, we can see this by noting that a quadratic vector field $v$ with non-degenerate singularities is Hamiltonian if and only if at every singular point $p$ we have $\tr{Dv(p)}=0$. Indeed, the linear polynomial $\tr{Dv(x,y)}=P'_x+Q'_y$ will be identically zero as soon as it vanishes on three non-collinear points. This imposes three independent conditions and so $\mathcal{H}_2$ has codimension three in the space of all quadratic vector fields, which has dimension 12. Because of this last argument any vector field that is affine equivalent to a Hamiltonian vector field with non-degenerate singularities is itself Hamiltonian. Note that the quotient $\mathcal{H}_2\sslash\Aff{2}{\C}$ has dimension three. On the other hand, each singularity of a Hamiltonian vector field carries only one analytic invariant: the determinant of its linearization matrix. These invariants are subject to the Euler-Jacobi relation (\ref{eq:EJ1}) and so the space of possible spectra is three dimensional. Once again, the dimension of the space of essential parameters matches the dimension of the space of invariants and the spectra map has its maximal possible rank. This implies the following result.

\begin{theorem}\label{thm:twinHam}
A generic quadratic Hamiltonian vector field has a unique twin vector field. In fact, the twin vector field of a generic quadratic Hamiltonian vector field $v$ is precisely its negative $-v$.
\end{theorem}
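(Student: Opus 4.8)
The plan is to prove the two assertions in turn: first verifying directly that $-v$ is a twin of $v$, and then using the linear-algebraic framework from the proof of \hyperref[thm:main]{Theorem \ref*{thm:main}} to show that no other twin can exist. First I would observe that $-v$ is a twin. Negation does not move the singular locus, since $-v(p)=0$ exactly when $v(p)=0$, and at any singularity $p$ one has $D(-v)(p)=-Dv(p)$. Because $v$ is Hamiltonian, $\tr{Dv(p)}=0$, so $\tr{-Dv(p)}=0$ as well; and for a $2\times 2$ matrix $\det{-Dv(p)}=\det{Dv(p)}$. Hence $\spec{}{D(-v)(p)}=\spec{}{Dv(p)}$ at every $p\in\Sing{v}$, so $-v$ and $v$ share singular locus and all spectra. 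As long as $v\neq 0$ (in particular for generic $v$) we have $-v\neq v$, so $-v$ is a genuine twin.

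For uniqueness I would reuse the setup of \hyperref[thm:main]{Theorem \ref*{thm:main}}: by Max Noether's theorem every quadratic vector field vanishing on $\Sing{v}$ is of the form $\tilde v$ with $D\tilde v(x,y)=A\cdot Dv(x,y)$ for a unique $A=\left(\begin{smallmatrix}a&b\\c&d\end{smallmatrix}\right)$, and by \hyperref[lemma]{Lemma \ref*{lemma}} and the remark following it such a $\tilde v$ is a twin precisely when it agrees with $v$ in spectrum at $p_1,p_2,p_3$. In the Hamiltonian case the spectrum at each singularity is recorded by the determinant alone, and the matching conditions from (\ref{eq:system}) read $\det{A}=1$ together with $\tr{A\cdot Dv(p_k)}=0$ for $k=1,2,3$, the traces on the right-hand side all vanishing.

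The key point is then purely linear-algebraic. Writing $M_k=Dv(p_k)$, each $M_k$ lies in $\mathfrak{sl}_2\C$ because $v$ is Hamiltonian, and the three linear equations say that $A$ is orthogonal, with respect to the nondegenerate pairing $\langle A,M\rangle=\tr{A\cdot M}$ on $\mathfrak{gl}_2\C$, to the span of $M_1,M_2,M_3$. For a generic Hamiltonian field these three matrices are linearly independent and hence span $\mathfrak{sl}_2\C$; since the trace-orthogonal complement of $\mathfrak{sl}_2\C$ in $\mathfrak{gl}_2\C$ is exactly the line of scalar matrices $\C\cdot I$, the linear system forces $A=\lambda I$. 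The remaining equation $\det{A}=1$ becomes $\lambda^2=1$, so $A=I$ or $A=-I$; the former recovers $v$ and the latter gives $-v$. This simultaneously proves that the twin is unique and that it equals $-v$.

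The one substantive thing to check is the genericity hypothesis that $Dv(p_1),Dv(p_2),Dv(p_3)$ are linearly independent in $\mathfrak{sl}_2\C$, which is what guarantees the trace-orthogonal complement collapses to $\C\cdot I$; I expect this to be the main (though routine) obstacle, and it can be verified on a single explicit example. Conveniently, the vector field $v=x(x+2y-1)\frac{\partial}{\partial x}+y(-2x-y+1)\frac{\partial}{\partial y}$ used in the proof of \hyperref[thm:main]{Theorem \ref*{thm:main}} is already Hamiltonian, so the same computation applies and witnesses the generic situation.
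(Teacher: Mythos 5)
Your proof is correct, and it takes a genuinely different route from the paper's. The paper proves this theorem jointly with Theorem \ref{thm:realization} by explicit computation: it normalizes the singularities to $(0,0)$, $(1,0)$, $(0,1)$, imposes the Hamiltonian constraints $a_2=2a_1$, $a_5=-2a_1$, $a_6=-a_1$ in the parametrization (\ref{eq:coefficients}), and inverts the system (\ref{eq:detsHam}) expressing $d_1,d_2,d_3$ in terms of $a_1,a_3,a_4$; since $a_1$ enters the solution only through $a_1^2$, the two branches of the square root differ by a global sign of $(a_1,a_3,a_4)$, i.e.\ by $v\mapsto -v$, which yields uniqueness and the identification of the twin simultaneously. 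You instead verify directly that $-v$ is a twin and isolate the conceptual reason for uniqueness: in the Max Noether framework from the proof of Theorem \ref{thm:main}, the three trace conditions say $A$ is trace-orthogonal to the span of $M_1,M_2,M_3\in\mathfrak{sl}_2\C$, and since the trace-orthogonal complement of $\mathfrak{sl}_2\C$ in $\mathfrak{gl}_2\C$ is exactly $\C\cdot I$, spanning forces $A=\pm I$ --- a clean explanation of \emph{why} the twin is $-v$, a fact that in the paper only emerges from the sign flip in the solution formulas. Each approach buys something: yours is coordinate-free and shorter; the paper's explicit inversion simultaneously settles the realization problem (Theorem \ref{thm:realization}) and produces the explicit genericity condition (non-exceptional spectrum). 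Incidentally, your genericity hypothesis turns out to be \emph{equivalent} to the paper's, not merely some other dense open condition: in the normalized coordinates the matrices $M_1,M_2,M_3$ are dependent precisely when $a_1(a_1-a_3)(a_1+a_4)=0$, while $(d_1+d_2)(d_1+d_3)(d_2+d_3)=-8a_1^2(a_1+a_4)^2(a_1-a_3)^2$, and the Euler-Jacobi relation (\ref{eq:EJ1}) converts any vanishing pair sum involving $d_4$ into one among $d_1,d_2,d_3$; so your spanning condition fails exactly on exceptional spectra. Finally, your proposed witness is sound: $v=x(x+2y-1)\frac{\partial}{\partial x}+y(-2x-y+1)\frac{\partial}{\partial y}$ has zero divergence, hence is Hamiltonian, and its linearizations at $(0,0)$, $(1,0)$, $(0,1)$ are $\left(\begin{smallmatrix}-1&0\\0&1\end{smallmatrix}\right)$, $\left(\begin{smallmatrix}1&2\\0&-1\end{smallmatrix}\right)$, $\left(\begin{smallmatrix}1&0\\-2&-1\end{smallmatrix}\right)$, which are visibly independent in $\mathfrak{sl}_2\C$ (and linear independence is preserved under the simultaneous conjugation induced by affine equivalence, so one example does establish genericity).
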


Because each singularity of a Hamiltonian vector field has vanishing trace it is clear that $v$ and $-v$ have the same singular set and the same spectra of singularities. Note however that in general these vector fields are not affine equivalent.

In the case of quadratic Hamiltonian vector fields we can explicitly say what we mean by \emph{generic} in the above theorem.

\begin{definition}
Let $v$ be a quadratic Hamiltonian vector field with non-degenerate singularities. We say that the spectrum of $v$ is \emph{exceptional} if there exist singular points $p_i,p_j$ such that
\[ \det{Dv(p_j)}+\det{Dv(p_k)}=0. \]
A quadratic Hamiltonian vector field is a \emph{generic quadratic Hamiltonian vector field} if it has non-degenerate singularities and its spectrum is not exceptional.
\end{definition}

Note that if $\det{Dv(p_j)}+\det{Dv(p_k)}=0$ then the eigenvalues of $Dv(p_j)$ differ from the eigenvalues of $Dv(p_k)$ only by multiplication of a common factor of $\sqrt{-1}$. 

\medskip
Besides making the genericity assumptions explicit in the Hamiltonian case we are also able to solve the realization problem.

\begin{definition}
A collection of four non-zero complex numbers $\mathcal{C}=\{d_1,\ldots,d_4\}$ is called \emph{admissible} for quadratic Hamiltonian vector fields if it satisfies
\[ \sum_{k=1}^4 \frac{1}{d_k} = 0. \]
An admissible collection is called \emph{exceptional} if there exist $d_j,d_k$ such that $d_j+d_k=0$.
\end{definition}

\begin{theorem}\label{thm:realization}
A collection $\mathcal{C}$ admissible for quadratic Hamiltonian vector fields is realizable as the spectrum of a quadratic Hamiltonian vector field if and only if one of the following two conditions holds:
\begin{enumerate}
 \item $\mathcal{C}$ is a non-exceptional collection,
 \item $\mathcal{C}$ is exceptional of the form $\{d,-d,d,-d\}$, for some $d\in\C^{\ast}$.
\end{enumerate}
\end{theorem}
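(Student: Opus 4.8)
The plan is to reduce the realization problem to a statement about cubic polynomials and then settle it by means of an explicit normal form.

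\emph{Reduction.} A quadratic vector field with non-degenerate singularities is Hamiltonian exactly when it is divergence-free, so I may write $v=H_y\frac{\partial}{\partial x}-H_x\frac{\partial}{\partial y}$ for a cubic $H$. Its singularities are the critical points of $H$, and a direct computation gives $\det{Dv(p)}=H_{xx}H_{yy}-H_{xy}^2$, the Hessian determinant of $H$ at $p$. Necessity of the admissibility relation is precisely the Euler--Jacobi relation (\ref{eq:EJ1}). Since the numbers $d_k$ are affine invariants and transform as $\lambda^2 d_k$ under $H\mapsto\lambda H$, realizability depends only on the affine class and is invariant under common rescaling, so I am free to normalize the singular locus. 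First I would record that the four singular points are automatically in general position: a line through three of them would meet each of the conics $\{H_x=0\}$ and $\{H_y=0\}$ in three points and hence be contained in both, contradicting that their intersection is four isolated points. Thus three of the singularities can be sent to $(0,0),(1,0),(0,1)$ by an affine map.

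\emph{Normal form and inversion.} Imposing critical points at $(0,0),(1,0),(0,1)$ determines every coefficient of $H$ in terms of $u:=h_{20}$, $w:=h_{02}$, $t:=h_{11}$ (for instance $h_{30}=-\tfrac23 u$, $h_{21}=h_{12}=-t$), and one computes
\begin{align*}
d_1&=4uw-t^2, & d_2&=-4uw+4ut-t^2, & d_3&=-4uw+4wt-t^2.
\end{align*}
Solving the first two relations for $u,w$ in terms of $t$ and substituting into the third produces the inversion formula
\[ t^2=-\frac{(d_1+d_2)(d_1+d_3)}{2(d_2+d_3)}, \]
the two signs of $t$ being the twin pair $v,-v$ of Theorem \ref{thm:twinHam}. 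The structural point I would emphasize is that whenever $d_1,d_2,d_3$ are non-zero the resulting field automatically lies in $\At_2$: the three fixed points are then simple zeros, so $H_x,H_y$ cannot share a common factor (otherwise $v$ would vanish along a curve of degenerate singularities, which cannot account for three isolated non-degenerate ones), whence by B\'ezout there is a fourth simple zero, distinct from the others, whose determinant is forced by (\ref{eq:EJ1}) — via the Lemma — to equal the prescribed $d_4$. Consequently, realizing a collection with a prescribed assignment of three of its values to $(0,0),(1,0),(0,1)$ is \emph{equivalent} to the inversion formula returning a finite non-zero $t^2$, i.e.\ to there being no zero-sum pair among those three values.

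\emph{Conclusion in both cases.} If $\mathcal{C}$ is non-exceptional there is no zero-sum pair at all, so any assignment works and $\mathcal{C}$ is realizable. If $\mathcal{C}$ is exceptional then admissibility forces the shape $\{p,-p,q,-q\}$, and by pigeonhole every three of the four values contain a zero-sum pair; the inversion formula can then yield a finite non-zero outcome only through a $0/0$ cancellation. Carrying out that cancellation forces two of the three chosen values to coincide and pins the collection down to $\{d,-d,d,-d\}$. Conversely I would exhibit this collection directly, e.g.\ via $(u,w,t)=(1+\tfrac{d}{4},1,2)$, whose singular points are $(0,0),(1,0),(0,1),(1,-1)$ with determinants $d,d,-d,-d$, completing the proof.

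\emph{Main obstacle.} Two points require care. The first is guaranteeing that the explicit construction genuinely lands in $\At_2$ for every admissible non-exceptional target; this is exactly what the general-position remark and the no-common-factor argument secure, so that non-degeneracy becomes an automatic consequence of $d_k\neq 0$ rather than an extra hypothesis to verify. The second, and the real crux, is the $0/0$ analysis of the inversion formula on the exceptional locus: it is there that the collections $\{d,-d,d,-d\}$ separate from the non-realizable exceptional ones, and getting this limiting behaviour exactly right — including confirming, by the explicit example, that the indeterminate value is actually attained — is the heart of the argument.
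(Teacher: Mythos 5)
Your proposal is, up to the linear change of parameters $a_1=-t$, $a_3=-2w$, $a_4=2u$ (coming from $P=H_y$, $Q=-H_x$), the same proof as the paper's: you normalize three singularities to $(0,0),(1,0),(0,1)$, your formulas $d_1=4uw-t^2$, $d_2=-4uw+4ut-t^2$, $d_3=-4uw+4wt-t^2$ are exactly the paper's equations (\ref{eq:detsHam}), and your inversion formula for $t^2$ is literally the paper's formula for $a_1^2$. One genuine addition on your side: the collinearity check justifying the normalization, and the no-common-factor/B\'ezout argument showing the constructed field lies in $\At_2$ with a fourth \emph{non-degenerate} singularity whose determinant is then forced by (\ref{eq:EJ1}) to equal $d_4$ --- the paper leaves this verification implicit, and making it explicit is a real improvement. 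Your closing example $(u,w,t)=(1+\tfrac{d}{4},1,2)$ checks out (determinants $d,d,-d,-d$ at $(0,0),(1,0),(0,1),(1,-1)$).

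There is, however, one concrete flaw in the exceptional half. Your claimed equivalence --- realizability of a prescribed assignment iff the inversion formula returns a finite non-zero $t^2$, iff no zero-sum pair among the three assigned values --- is false as a biconditional, and the false direction is the one your classification argument leans on. The inversion formula is obtained by dividing by $t$, so realizations with $t=0$ bypass it entirely: when $t=0$ the system reads $d_1=4uw$, $d_2=d_3=-4uw$, so the assignment $(-d,d,d)$, which contains a zero-sum pair and for which your formula returns $t^2=0$ rather than a finite non-zero value, \emph{is} realizable by a one-parameter family (this is precisely the paper's realization of the exceptional collections, with $a_1=0$). Consequently your ``only if'' analysis, which assumes every realization is governed by the formula and must therefore pass through a $0/0$ cancellation, silently omits the $t=0$ branch. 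The omission is harmless for the theorem, since on that branch $d_2=d_3=-d_1$ and the collection is again $\{d,-d,d,-d\}$, but it must be checked; the clean repair is the paper's dichotomy: $d_1+d_2=2t(2u-t)=0$ forces $t=0$ or $t=2u$, and each branch forces a coincidence $d_3=d_2$ or $d_3=d_1$, pinning down the shape. With that case added, your proof is complete and coincides with the paper's.
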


\begin{proof}[Proof of Theorems \ref*{thm:twinHam} and \ref*{thm:realization}]
Every quadratic vector field with four isolated singularities is affine equivalent to a vector field with singularities at $p_1=(0,0)$, $p_2=(1,0)$, $p_3=(0,1)$. Any such vector field is completely determined by six parameters. Indeed, if $v=P\frac{\partial}{\partial x}+Q\frac{\partial}{\partial y}$ vanishes on such points then $P$ and $Q$ must be of the form
\begin{equation}\label{eq:coefficients}
  \arraycolsep=1pt\def\arraystretch{1.4}
\begin{array}{l} 
P(x,y) = a_1x^2+a_2xy+a_3y^2-a_1x-a_3y,  \\ 
Q(x,y) = a_4x^2+a_5xy+a_6y^2-a_4x-a_6y, \\
\end{array}
\end{equation}
for arbitrary $a_1,\ldots,a_6\in\C$. In the particular case the vector field in question is Hamiltonian we further obtain the following:
\begin{align*}
a_2 &= 2a_1, \\
a_5 &= -2a_1, \\
a_6 &= -a_1.
\end{align*}
A short computation shows that the determinant of $d_k:=Dv(p_k)$ is given by
\begin{equation}\label{eq:detsHam}
  \arraycolsep=1pt\def\arraystretch{1.4}
\begin{array}{l} 
d_1 = -a_1^2-a_3a_4, \\
d_2 = -a_1^2-2a_1a_4+a_3a_4, \\
d_3 = -a_1^2+2a_1a_3+a_3a_4. \\
\end{array}
\end{equation}
If the spectrum of $v$ is non-exceptional we can always solve the above system of equations for $a_1,a_3,a_4$ in terms of $d_1,d_2,d_3$ to obtain:
\begin{equation*}
  \arraycolsep=1pt\def\arraystretch{2.8}
\begin{array}{l} 
a_1^2 = \displaystyle\frac{-(d_1+d_2)(d_1+d_3)}{2(d_2+d_3)}, \\
a_3   = a_1+\displaystyle\frac{d_1+d_3}{2a_1}, \\
a_4   = -a_1-\displaystyle\frac{d_1+d_2}{2a_1}. \\
\end{array}
\end{equation*}
This proves that non-exceptional admissible collections are realizable. Note that these expressions yield two solutions (depending on the branch of the square root chosen for $a_1^2$) and one solution differs from the other by a sign, hence proving \hyperref[thm:twinHam]{Theorem \ref*{thm:twinHam}}.

\medskip
Now, assume $v$ is a Hamiltonian vector field with exceptional spectrum; without loss of generality we can assume $d_1+d_2=0$. Note that the Euler-Jacobi relation (\ref{eq:EJ1}) implies that $d_3+d_4=0$. From equations (\ref{eq:detsHam}) we deduce that either $a_1=0$ or $a_1+a_4=0$. If $a_1=0$ then we conclude that $d_3=d_2$ (and thus $d_4=d_1$) and if $a_1+a_4=0$ we conclude that $d_3=d_1$ (and thus $d_4=d_2$). In either case we see that exceptional spectra of quadratic Hamiltonian vector fields are always of the form $\mathcal{C}=\{d,-d,d,-d\}$, for some $d\in\C^{\ast}$. On the other hand any such collection $\mathcal{C}$ can be realized (choosing $d_3=d_2$) by setting $a_1=0$ and $a_3,a_4$ any complex numbers that satisfy $a_3a_4=d$, since equations (\ref{eq:detsHam}) become
\begin{equation*}
  \arraycolsep=1pt\def\arraystretch{1.2}
\begin{array}{l} 
d_1 = -a_3a_4, \\
d_2 = a_3a_4, \\
d_3 = a_3a_4. \\
\end{array}
\end{equation*}
\end{proof}

Note that the last argument in the above proof shows that, given an exceptional collection $\mathcal{C}=\{d,-d,d,-d\}$, there exits a one-dimensional family of quadratic Hamiltonian vector fields which are pairwise not affine equivalent and realize $\mathcal{C}$ as their spectra of singularities. This implies that a quadratic Hamiltonian vector field with non-degenerate singularities and exceptional spectrum has a one-dimensional family of twin vector fields.

\section{Characteristic numbers at infinity}

It follows from \hyperref[thm:main]{Theorem \ref*{thm:main}} that quadratic vector fields are determined by their spectra of singularities up to finite ambiguity. This suggests that if we take into account additional analytic invariants we should expect to have enough information to single out a unique vector field. Thus far, we have not yet considered the singular points at infinity. For these singular points we cannot define unambiguously their spectra but we can always define their characteristic numbers. There is, however, a slight difficulty we will find: suppose we are given polynomials $P,Q$ defining a vector field, in order to compute the characteristic numbers at infinity we must firs \emph{find} the singular points at infinity; this is involves solving a cubic equation. In order to bypass this we will work backwards: we will assume that we are given the position and characteristic numbers of the singularities at infinity and use this information to recover the polynomials $P$ and $Q$ (subject to the normalization given by requiring the vector field to have singularities at $p_1=(0,0)$, $p_2=(1,0)$, $p_3=(0,1)$).

The following proposition is a well-known fact and its proof is immediate.

\begin{proposition}
In the generic case there is a one to one correspondence between quadratic homogeneous vector fields 
\[ \xi(x,y)=P_2\frac{\partial}{\partial x}+Q_2\frac{\partial}{\partial y}, \]
modulo multiplication by a non-zero complex number and foliations with linear monodromy 
\[ \frac{dz}{dw}=z\sum_{j=1}^3\frac{\mu_j}{w-w_j}, \qquad \mu_1+\mu_2+\mu_3=1, \]
defined in a neighborhood of the line at infinity $z=0$ (where $z,w$ are the coordinates $(z,w)=(1/x,y/x)$).
\end{proposition}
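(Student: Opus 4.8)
The plan is to set up an explicit correspondence in both directions and then observe that, generically, the two constructions are mutually inverse. First I would pass to the affine chart at infinity given by $(z,w)=(1/x,y/x)$, so that the line at infinity becomes $\{z=0\}$. A quadratic homogeneous vector field $\xi=P_2\frac{\partial}{\partial x}+Q_2\frac{\partial}{\partial y}$ defines a foliation of $\C^2$ whose restriction to a neighborhood of $\{z=0\}$ is what we must analyze. Writing $P_2,Q_2$ as binary quadratic forms in $(x,y)$ and substituting $x=1/z$, $y=w/z$, I would compute the $1$-form (or the slope field $dz/dw$) cutting out the foliation and verify by direct calculation that, after clearing denominators, the leaves near $z=0$ satisfy an equation of Riccati type which, because $\xi$ is homogeneous, degenerates to the stated \emph{linear} form $\frac{dz}{dw}=z\sum_{j=1}^3\frac{\mu_j}{w-w_j}$.

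The key step is to identify the points $w_j$ and the residues $\mu_j$ intrinsically. The $w_j$ are the three roots (generically distinct) of the binary cubic form
\[
 R(1,w):=x\,Q_2-y\,P_2 \big|_{x=1,\,y=w},
\]
which is the tangency locus of $\xi$ with the radial directions, equivalently the directions determining the singular points of the induced foliation on the line at infinity. The residues $\mu_j$ are then the quotients of the two binary forms evaluated at each root, namely the ratio of $P_2+ (\text{appropriate homogeneous factor})$ to $R$ at $w=w_j$; a short residue computation shows these are precisely the characteristic numbers of the singular points at infinity and that the relation $\mu_1+\mu_2+\mu_3=1$ follows from the fact that $P_2,Q_2$ are homogeneous of degree two (it is the sum-of-residues identity for the degree-three denominator together with the degree count in the numerator). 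This normalization $\sum\mu_j=1$ is exactly the constraint that makes the correspondence well-defined after quotienting $\xi$ by a nonzero scalar, since scaling $\xi$ scales $P_2,Q_2$ jointly and leaves each ratio $\mu_j$ unchanged.

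Conversely, given three distinct points $w_1,w_2,w_3$ and numbers $\mu_j$ with $\sum\mu_j=1$, I would reconstruct the binary forms: the denominator $R$ is determined up to scale by its roots $w_j$, and the $\mu_j$ together with $\sum\mu_j=1$ determine the numerator form uniquely up to the same overall scale via partial-fraction reconstruction. Unwinding the chart change $x=1/z$, $y=w/z$ recovers $P_2,Q_2$ up to a common nonzero factor, i.e.\ a well-defined point in the space of $\xi$ modulo scalars. Checking that this reconstruction inverts the forward map (and vice versa) is a matter of matching the three roots and three residue values, which generically pins down the two binary quadratic forms; the parameter count matches, since a homogeneous quadratic vector field modulo scalars has dimension $5$, while the data $(w_1,w_2,w_3,\mu_1,\mu_2,\mu_3)$ subject to one relation also has dimension $5$.

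The main obstacle is not any deep idea but the bookkeeping of the chart change and the precise identification of the residues with the characteristic numbers, including the nondegeneracy hypotheses hidden in ``generic''. Specifically, one must assume the cubic $R(1,w)$ has three distinct roots (so the three singular points at infinity are simple) and that none of them is the point at infinity of the $w$-line, i.e.\ that $\xi$ has no radial direction along $\{z=0\}$ other than the three accounted for; these are open dense conditions. Once these genericity assumptions are in place the two constructions are visibly inverse, and the proposition follows. Because the author states the proof is immediate, I would present only the forward dictionary $(\xi)\mapsto(w_j,\mu_j)$ and the inverse partial-fraction reconstruction, leaving the verification of mutual inversion to the reader as the routine check it is.
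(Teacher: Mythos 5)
Your proposal is correct and follows essentially the argument the paper has in mind: the paper omits the proof as ``immediate,'' and the explicit formulae it then gives for $a_1,\ldots,a_6$ in terms of $(\mu_j,w_j,\kappa)$ are precisely the output of your chart-change and partial-fraction computation (e.g.\ $a_3=\kappa$ and $a_4=\kappa w_1w_2w_3$ drop out of reconstructing $P_2(1,w)$ and $Q_2(1,w)=wP_2(1,w)-\kappa\prod_j(w-w_j)$, where the cubic terms cancel exactly because $\sum_j\mu_j=1$). One tiny gloss to fix: the normalization $\sum_j\mu_j=1$ is not what makes the quotient by scalars well-defined (the $\mu_j$ are scale-invariant regardless, as you note); its role is the sum-of-residues/degree-count identity you already derived, which in the inverse direction is exactly what makes the reconstructed $Q_2$ quadratic rather than cubic.
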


In virtue of the above proposition we can uniquely define a quadratic homogeneous vector field by specifying three points $w_1,w_2,w_3\in\P^1$, three numbers $\mu_1,\mu_2,\mu_3\in\C$ satisfying $\mu_1+\mu_2+\mu_3=1$, and a rescaling factor $\kappa\in\C^*$. Once such homogeneous vector field is given we use formula (\ref{eq:coefficients}) to define uniquely a quadratic vector field having singularities at $p_1=(0,0)$, $p_2=(1,0)$, $p_3=(0,1)$. 

A straightforward computation provides the following explicit formulae:

\[ \begin{array}{l}
a_1 =  \kappa\left(\mu_1 w_2 w_3+\mu_2 w_1 w_3+\mu_3 w_1 w_2\right),\\[1mm]
a_2 = -\kappa(\mu_1(w_2+w_3)+\mu_2(w_1+w_3)+\mu_3(w_1+w_2)),\\[1mm]
a_3 =  \kappa,\\[1mm]
a_4 =  \kappa w_1w_2w_3,\\[1mm]
a_5 = -\kappa(\mu_1w_1(w_2+w_3)+\mu_2w_2(w_1+w_3)+\mu_3w_3(w_1+w_2)),\\[1mm]
a_6 =  \kappa(\mu_1w_1+\mu_2w_2+\mu_3w_3).\\[1mm]
\end{array} \]

In order to prove that a generic vector field is completely determined by the spectra of its finite singularities and the characteristic numbers at infinity we will define a \emph{moduli map} $\mathcal{M}$ that assigns to each vector field its set of spectra and characteristic numbers.

In virtue of \hyperref[lemma]{Lemma \ref*{lemma}}, the spectra of $v$ is completely determined by the spectra at $p_1,p_2,p_3$. Let us define a map $\spec{}{}\colon\C^6\to\C^6$ that assigns to each vector field as above its spectra of singularities at $p_1,p_2,p_3$. Namely,
\begin{equation}\label{eq:modulimap1}
\spec{}{(a_1,\ldots,a_6)}=(\spec{}{p_1},\spec{}{p_2},\spec{}{p_3}).
\end{equation}
Note that the map $\spec{}{}$ is polynomial and its components are either linear (for the traces) or quadratic (for the determinants).

\hyperref[thm:main]{Theorem \ref*{thm:main}} may now be restated as follows.

\begin{thmbis}\hypertarget{thm:bis}{}
The map $\spec{}{}\colon\C^6\to\C^6$ is a regular dominant map whose generic fiber consists of two points.
\end{thmbis}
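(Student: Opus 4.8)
The plan is to show that the fiber of $\spec{}{}$ over a generic point is exactly the two-element set consisting of a vector field and its unique twin, and then to deduce dominance from the finiteness of this fiber. Regularity is immediate: from the formulas for $Dv(p_k)$, the three trace components of $\spec{}{}$ are linear and the three determinant components are quadratic in $(a_1,\ldots,a_6)$, so $\spec{}{}$ is a polynomial map between affine spaces.

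First I would fix a generic $v\in\C^6$ with four non-degenerate singularities $p_1,p_2,p_3,p_4$ (recall that $p_1,p_2,p_3$ are the fixed base points of the slice (\ref{eq:coefficients})) for which Theorem \ref{thm:main} applies, so that $v$ has a unique twin $\tilde v$. Since twins share their singular locus, $\tilde v$ vanishes at $p_1,p_2,p_3$ and hence also lies in the slice $\C^6$; moreover $v$ and $\tilde v$ have the same spectra at $p_1,p_2,p_3$, so $\spec{}{\tilde v}=\spec{}{v}$. As $\tilde v\neq v$ generically, the fiber over $\spec{}{v}$ contains at least these two points.

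Conversely, let $v'$ be any vector field in this fiber. Then $v'$ has singularities at $p_1,p_2,p_3$ with the same (non-degenerate) spectra as $v$. Provided $v'$ also has four non-degenerate singularities, Lemma \ref{lemma} forces its fourth singularity to coincide with $p_4$ in position and spectrum; thus $v'$ vanishes on all of $\Sing v$, and by Max Noether together with the analysis in the proof of Theorem \ref{thm:main} (the system (\ref{eq:system})) one gets $v'\in\{v,\tilde v\}$. To control the vector fields $v'$ whose singularities degenerate, where Lemma \ref{lemma} does not directly apply, I would observe that the locus $Z\subset\C^6$ of fields with fewer than four non-degenerate singularities is a proper closed subvariety, so $\dim\overline{\spec{}{Z}}\leq\dim Z<6$; hence for $v$ outside the proper closed set $\spec{}{}^{-1}(\overline{\spec{}{Z}})$ the entire fiber avoids $Z$ and the previous paragraph applies to every $v'$. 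This shows the generic fiber is exactly $\{v,\tilde v\}$, two points.

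Finally, having a nonempty finite generic fiber, the fiber-dimension theorem gives $\dim\overline{\spec{}{\C^6}}=6$; since the target $\C^6$ is irreducible of dimension $6$, the image is dense and $\spec{}{}$ is dominant. The step I expect to require the most care is the control of the degenerate locus $Z$: one must ensure that no spurious points enter the fiber from vector fields lacking four transverse singularities (where the Euler--Jacobi machinery behind Lemma \ref{lemma} breaks down), and the dimension estimate above is the cleanest way I see to guarantee this for a generic target. Alternatively, one could eliminate $a_2,a_5,a_6$ using the three linear trace equations and verify directly that the resulting system of three quadrics in $a_1,a_3,a_4$ has exactly two solutions for a generic right-hand side, but the conceptual argument via Theorem \ref{thm:main} avoids this computation.
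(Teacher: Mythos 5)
Your route is essentially the paper's own: the paper gives no separate argument for Theorem 1.3', presenting it as a restatement of Theorem \ref{thm:main}, and the machinery you invoke---Lemma \ref{lemma} to pin down the fourth singularity in position and spectrum, Max Noether to write any competitor vanishing on $\Sing{v}$ in terms of $P$ and $Q$, and system (\ref{eq:system}) having generically exactly two solutions---is precisely the proof of that theorem. Your added bookkeeping (a twin of a vector field in the normalized slice again lies in the slice; dominance via the fiber-dimension theorem; control of the degenerate locus $Z$) is material the paper leaves entirely implicit, and it is right to write it out: the exceptional Hamiltonian spectra show that some fibers are positive-dimensional even inside the non-degenerate locus, so genericity genuinely matters at the step where (\ref{eq:system}) is solved.

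However, one step is circular as written. You declare $\spec{}{}^{-1}\bigl(\overline{\spec{}{}(Z)}\bigr)$ to be a \emph{proper} closed subset of $\C^6$, but properness requires that the image of $\spec{}{}$ not be contained in $\overline{\spec{}{}(Z)}$, which is a weak form of the very dominance you derive only afterwards from the finiteness of the generic fiber---and that finiteness, in your third paragraph, is established only for $v$ outside $\spec{}{}^{-1}\bigl(\overline{\spec{}{}(Z)}\bigr)$. The cycle breaks with ingredients you already have, by reordering: your converse paragraph shows, with no hypothesis on $Z$ at all, that for generic $v$ the fiber over $\spec{}{v}$ meets the open set $U_0=\C^6\setminus Z$ in exactly $\{v,\tilde{v}\}$; since $v\in U_0$ and $U_0$ is open, the component of the full fiber through $v$ is zero-dimensional, so the fiber-dimension theorem already gives $\dim\overline{\operatorname{Im}\,\spec{}{}}=6$, i.e., dominance. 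Only then does $\overline{\spec{}{}(Z)}$, of dimension at most $5$, miss the generic point of the target, so the generic full fiber lies in $U_0$ and equals $\{v,\tilde{v}\}$. With this reordering your argument is complete and matches the paper's intent.
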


We now define the moduli map $\mathcal{M}\colon\C^6\to\C^2\times\C^6$ by the formula
\[\mathcal{M}(\mu_1,\mu_2,w_1,w_2,w_3,\kappa)=((\mu_1,\mu_2),\spec{}{(a_1,\ldots,a_6)}). \]

In order to prove \hyperref[thm:charnumbers]{Theorem \ref*{thm:charnumbers}} we need to show that the generic fiber of the moduli map $\mathcal{M}$ consists of a single point. In order to do this it is enough to find a non-empty open set $W\subset\operatorname{Im }\mathcal{M}$ and $U\subset\C^6$ such that $U=\mathcal{M}^{-1}(W)$ and $\mathcal{M}\vert_{U}\colon U\to W$ is one-to-one.

\begin{proof}[Proof of Theorem \ref*{thm:charnumbers}]
Consider the following vector field
\[ v = (x^2+2xy-x)\frac{\partial}{\partial x}+(-xy+3y^2-3y)\frac{\partial}{\partial y}. \]
A simple computation shows that the unique twin vector field of $v$ is
\[ \tilde{v} = (3x^2+6xy-3x)\frac{\partial}{\partial x}+\left(-\frac{7}{3}x^2-5xy+y^2+\frac{7}{3}x-y\right)\frac{\partial}{\partial y}, \]
and that the derivative of the map $\spec{}{}$ is invertible both at $v$ and at $\tilde{v}$. Let $\Lambda=\spec{}{v}$. We can deduce from the inverse function theorem and from \hyperlink{thm:bis}{Theorem 1.3'} the existence of neighborhoods $W_1,U,\widetilde{U}$ of $\Lambda,v,\tilde{v}$ respectively such that $\spec{}{}^{-1}(W_1)=U\cup\widetilde{U}$ and $\spec{}{}$ maps both $U$ and $\widetilde{U}$ diffeomorphically onto $W_1$. On the other hand, it is not hard to check that the characteristic numbers at infinity of $v$ and $\tilde{v}$ are different (the characteristic numbers of $v$ are all rational whereas the characteristic numbers of $\tilde{v}$ are not). By shrinking $U$ and $\widetilde{U}$ if necessary we can assume that vector fields in $U$ have different characteristic numbers from any vector field in $\widetilde{U}$. This means that if $(\mu_1,\mu_2)$ are the characteristic numbers of $v$ we can find a small neighborhood $W_0$ of $(\mu_1,\mu_2)$ such that no vector field from $\widetilde{U}$ has characteristic numbers in $W_0$. Define $W=W_0\times W_1\subset\C^2\times\C^6$. Since $\mathcal{M}=(\mu_1,\mu_2,\spec{}{})$ and $\spec{}{}^{-1}(W_1)=U\cup\widetilde{U}$, we must have $\mathcal{M}^{-1}(W)\subset U\cup\widetilde{U}$. However we also know that $\mathcal{M}^{-1}(W)$ is disjoint from $\widetilde{U}$, by construction of $W_0$, and so $\mathcal{M}^{-1}(W)=U$. We conclude that $\mathcal{M}$ is one-to-one over $W$ and so the generic fiber consists of a single point.
\end{proof}

\section{The image of the moduli map \texorpdfstring{$\mathcal{M}$}{the moduli map}}

As shown by \hyperref[thm:charnumbers]{Theorem \ref*{thm:charnumbers}}, the closure of the image of the moduli map $\mathcal{M}\colon\C^6\to\C^8$ is a codimension two subvariety of $\C^8$. This implies that there are at least two independent relations between the spectra of finite singularities and the characteristic numbers at infinity. One of these is the well-known \emph{Baum-Bott formula,} yet there must exist one more relation.

\begin{question}\label{ques:hiddenvf}
What is the missing relation between the spectra of finite singularities and the characteristic numbers at infinity for a generic quadratic vector field?
\end{question}

The above question is closely related to a question on the \emph{hidden relations} between the spectra of the derivatives at the fixed points of a regular endomorphism $f\colon\P^2\to\P^2$, posed by Adolfo Guillot in \cite{Guillot2004}. There are known relations (generalizing the \emph{holomorphic Lefschetz fixed point formula}) among these eigenvalues yet a dimensional argument shows that there must be even more relations. Guillot's question is: what are those missing relations?

\section{Acknowledgments}

The proof of \hyperref[thm:main]{Theorem \ref*{thm:main}} was improved thanks to arguments suggested by Yulij S.~Ilyashenko and John H.~Hubbard. 
The results presented in \hyperref[sec:QHVFs]{$\mathsection$\ref*{sec:QHVFs}} were a collaboration with Gilberto Bruno and Jessica Jaurez at Instituto de Matem\'{a}ticas UNAM. I would like to thank them for their interest in the problem and for their valuable contribution. My deepest gratitude to Laura Ortiz for the invitation to the visit where these discussions took place.
I'm thankful to Adolfo Guillot for introducing me to his work on endomorphisms of the projective plane. I also want to thank Adolfo Guillot and Yulij S.~Ilyashenko for their comments on earlier versions of this paper. 

\bibliographystyle{alpha}
\bibliography{ref-twin_vfs.bib}

\end{document}